\theoremstyle{plain}
\newtheorem{thm}{Theorem}[section]
\newaliascnt{lem}{thm}
\newtheorem{lem}[lem]{Lemma}
\newaliascnt{pro}{thm}
\newaliascnt{cor}{thm}
\newtheorem{cor}[cor]{Corollary}
\newaliascnt{problem}{thm}
\newaliascnt{que}{thm}
\newaliascnt{con}{thm}
\newaliascnt{clm}{thm}
\newtheorem*{thm1}{\autoref{thm:CycleTheorem}} 
\newtheorem*{thm2}{\autoref{thm:WeakCtwDualityThm}} 
\newtheorem*{thm3}{\autoref{thm:hyperbolic}} 
\newtheorem*{thm4}{\autoref{thm:treelength}} 
\newtheorem*{con1}{Conjecture}
\newtheorem*{twDualityThm}{Tree-width duality theorem} 
\theoremstyle{definition}
\newaliascnt{defi}{thm}
\newtheorem{defi}[defi]{Definition}
\newaliascnt{exm}{thm}
\def\COMMENT{}
\def\sub{\subseteq}
\newcommand \td {tree-decom\-po\-si\-tion}
\newcommand \tw {tree-width}
\newcommand \ttw {{\rm tw}}
\newcommand \ctw {con\-nect\-ed tree-width}
\newcommand \cttw {{\rm ctw}}
\newcommand \XC {{\mathcal C}[X]}
\let\doublebar=\| \def\size#1{{\doublebar#1\doublebar}}
\begin{document}
\title{Connected tree-width}
\author{Reinhard Diestel and Malte M\"uller}
\date{\today}
\maketitle

\begin{abstract}\noindent
The {\it \ctw\/} of a graph is the minimum width of a \td\ whose parts induce connected subgraphs. 
Long cycles are examples of graphs that have small \tw\ but large \ctw.
We show that a graph has small \ctw\ if and only if it has small \tw\ and contains no long geodesic cycle.

We further prove a connected analogue of the duality theorem for \tw: a finite graph has small \ctw\ if and only if it has no bramble whose connected covers are all large. Both these results are qualitative: the bounds are good but not tight.%
   \COMMENT{}

We show that graphs of \ctw\ $k$ are $k$-hyperbolic, which is tight, and that graphs of \tw~$k$ whose geodesic cycles all have length at most~$\ell$ are $\lfloor{3\over2}\ell(k-1)\rfloor$-hyperbolic. The existence of such a function $h(k,\ell)$ had been conjectured by Sullivan.
\end{abstract}

\section{Introduction}
Let us call a \td\ $(T, (V_t)_{t  \in T})$ of a graph $G$ \emph{connected} if its parts $V_t$ are connected in $G$. 
For example, the standard minimum width \td\ of a tree or a grid is connected. The \emph{\ctw} $\cttw(G)$ of $G$ is the minimum width that a connected \td\ of $G$ can have.

Much of the practical use of \td s, connected or not, derives from the fact that $G$ reflects the nested edge-separations of $T$ obtained by deleting a single edge: these correspond to nested vertex-separations of~$G$. If the \td\ and $G$ are connected, we also have a converse:\penalty-200\ then every (connected) subtree of~$T$ induces a connected subgraph of~$G$.%
   \COMMENT{}

We shall indicate below some contexts in which \ctw\ has been used for applications. The purpose of this paper, however, is to answer the natural first question one would ask: does \ctw\ differ from ordinary \tw, and if so how? The answer will be unexpectedly satisfying: we shall find an obstruction to when the two parameters are tied (in the sense that each is bounded by a function of the other), and be able to show that it is the only obstruction. Let us make this precise.

The ordinary tree-width $\ttw(G)$ of a graph $G$ is clearly at most its \ctw, so large \tw\ causes a graph to have large \ctw. But it is not the only possible cause.

A $k$-cycle~$C$, for example, has tree-width~2 but connected \tw~$\left\lceil k/2\right\rceil$. Indeed, its only connected subgraphs are its segments. In any \td\ none of whose parts contains another, the intersection of two adjacent parts separates them in the whole graph~\cite[Lemma\,12.3.1]{DiestelBook10noEE}. But the intersection of two such segments of $C$ only separates them in~$C$ if they cover~$C$~-- in which case one of them has at least $\left\lceil k/2\right\rceil$ edges.

Long cycles as subgraphs do not raise the \ctw: consider wheels. But long geodesic cycles do: we shall be able to show that every graph containing a $k$-cycle geodesically also has connected \tw\ at least~$\lceil k/2 \rceil$ (\autoref{lem:geoCycle2Bramble}).%
   \COMMENT{}
   (A~subgraph $H\sub G$ is \emph{geodesic} if $d_H(x, y) = d_G(x, y)$ for all its vertices $x$ and~$y$, where $d(x,y)$ is the length of a shortest $x$--$y$ path in $H$ or~$G$, respectively. Note that geodesic $u$--$v$ paths are simply shortest $u$--$v$ paths.)

Our main theorem says that, conversely, large \tw\ and long geodesic cycles are the only two obstructions to small \ctw:

\begin{thm}\label{thm:CycleTheorem}
The \ctw\ of a graph $G$ is bounded above by a function of its \tw\ and of the maximum length of its geodesic cycles.

Specifically, if $G$ is not a forest, $\ttw(G) < k\in\Bbb N$, and $\ell$ is the maximum length of a geodesic cycle in~$G$, then $\cttw(G) < f(k,\ell)$ for
\begin{equation*}
f(k,\ell) = k +  \binom{k}{2} \big(\ell (k-2) - 1\big).
\end{equation*}
\end{thm}

\autoref{thm:CycleTheorem} is qualitatively best possible in that the two obstructions are independent:  a large cycle (as a graph) contains a large geodesic cycle but has small \tw, while a large grid has large \tw\ but all its geodesic cycles are small. And both graphs have large \ctw.

\medbreak

Among the classical%
   \footnote{See~\cite{DiestelOumDualityI} for a more recent duality theorem for \tw\ that is also tight.}
   obstructions to small ordinary \tw\ there is one that gives a tight duality theorem: large-order brambles.
A \emph{bramble} is a set of pairwise touching connected sets of vertices, where two vertex sets \emph{touch} if they share a vertex or the graph has an edge between them. A~set of vertices \emph{covers} (or is a \emph{cover} of) a bramble if it has a vertex in each of these sets. The \emph{order} of a bramble is the smallest size of a cover.

\begin{twDualityThm}[Seymour \& Thomas \cite{ST1993GraphSearching, DiestelBook10noEE}]
A graph has \tw\ at least $k\ge 0$ if and only if it contains a bramble of order $> k$.
\end{twDualityThm}

To adapt this duality to \ctw, let the \emph{connected order} of a bramble be the least order  of a connected cover, a cover spanning a connected subgraph. Given any \td\ of a graph, it is well known and easy to show that every bramble is covered by a part of that \td. Hence graphs of \ctw\ $< k$ cannot have brambles of connected order $> k$. \autoref{thm:CycleTheorem} will enable us to prove a qualitative converse of this:

\begin{thm}\label{thm:WeakCtwDualityThm}
There is a function $g\colon \mathbb{N} \rightarrow \mathbb{N}$ such that every graph with no bramble of connected order $> k$ has \ctw\ $< g(k)$.
\end{thm}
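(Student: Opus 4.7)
The plan is to reduce to \autoref{thm:CycleTheorem} by bounding both $tw(G)$ and the maximum length of a geodesic cycle in $G$ by a function of $k$. The two sources of large connected tree-width identified by the Cycle Theorem each become, in the contrapositive, sources of brambles of large connected order.

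\textbf{Step 1: tree-width.} Every connected cover of a bramble is a cover, so the connected order of a bramble is at least its order. The hypothesis thus implies that no bramble of $G$ has order $>k$, and Seymour and Thomas's tree-width duality theorem gives $tw(G)\le k-1$.

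\textbf{Step 2: geodesic cycles.} Let $C=v_0v_1\cdots v_{\ell-1}v_0$ be a geodesic cycle of $G$ and set $m:=\lfloor\ell/2\rfloor+1$. For each $i\in\mathbb{Z}/\ell\mathbb{Z}$ put $B_i:=\{v_i,v_{i+1},\ldots,v_{i+m-1}\}$. Each $B_i$ is an arc of $C$, hence connected in $G$, and any two $B_i,B_j$ share a vertex because $2m>\ell$; so $\mathcal B_C:=\{B_i\}_i$ is a bramble of $G$. I will bound its connected order from below. Let $S\subseteq V(G)$ be a connected cover and $Y:=S\cap V(C)$. The cover condition forces every arc of $m$ consecutive vertices of $C$ to meet $Y$, so the maximum arc length $g$ (in edges) between consecutive $Y$-vertices of $C$ satisfies $g\le m$. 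Since $C$ is geodesic, $d_G(u,v)=d_C(u,v)$ for all $u,v\in V(C)$, so any closed walk in $G$ visiting $Y$ has length at least the cyclic TSP value $\min(\ell,2(\ell-g))$. Doubling the edges of any tree $T\subseteq G$ spanning $Y$ produces such a walk of length $2|E(T)|$, whence
\begin{equation*}
    2\bigl(|V(T)|-1\bigr)\;\ge\;\min\bigl(\ell,\,2(\ell-g)\bigr)\;\ge\;\ell-2.
\end{equation*}
A routine case check in $g$ then gives $|S|\ge|V(T)|\ge\lceil\ell/2\rceil$. Hence any geodesic cycle of length $\ge 2k+1$ yields a bramble of connected order $>k$, contradicting the hypothesis; so every geodesic cycle of $G$ has length at most $2k$.

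\textbf{Step 3 and main obstacle.} Combining $tw(G)\le k-1$ and $\ell\le 2k$ with \autoref{thm:CycleTheorem} yields an explicit polynomial bound on $ctw(G)$ from which $g(k)$ can be read off. The heart of the argument is the connected-order lower bound in Step 2, measured in the ambient graph $G$ rather than only inside $C$: a connected cover $S$ can wander outside $C$, and only the geodesic hypothesis on $C$ prevents $S$ from shortcutting. The TSP-style estimate handles this uniformly, provided one takes care of the regime where the maximum $Y$-gap exceeds $\ell/2$ and the back-and-forth tour along the short arcs undercuts a full cyclic tour.
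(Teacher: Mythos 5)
Your proof is correct in outline and, at the top level, follows the same reduction as the paper: bound $tw(G)$ by $k$ via the ordinary bramble order and the Seymour--Thomas duality theorem, bound the length of geodesic cycles by exhibiting a bramble of large connected order on any long geodesic cycle, and then invoke \autoref{thm:CycleTheorem}. Where you genuinely diverge is in the key lemma (the paper's \autoref{thm:geoCycle2Bramble}). The paper takes the bramble of \emph{all} connected subsets of $C$ of size exactly $\lfloor n/2\rfloor$ and lower-bounds a connected cover $X$ by a case analysis on $\left|X\cap C\right|$: two cover vertices on $C$ must be antipodal, hence joined inside $X$ by a path of length at least $n/2$; with three or more, one chooses them so that each of the three arcs they cut off is the shorter (hence geodesic) one, and a tripod in $X$ joining them is bounded below via the triangle inequality. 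You instead take the $\ell$ arcs of length $\lfloor\ell/2\rfloor+1$ and argue globally: doubling a spanning tree of the connected cover gives a closed walk through $Y=S\cap V(C)$, and since $C$ is geodesic this walk is at least as long as the circle-TSP value $\min(\ell,2(\ell-g))$. This is more uniform (no case split), but the one place where the real work hides is precisely that asserted TSP lower bound: you must rule out tours that hop back and forth across several gaps, which needs a connectivity-plus-parity (or winding-number) argument showing that in any closed tour of points on a geodesic circle every gap except possibly one is crossed a positive even number of times. The fact is true, so there is no fatal gap, but as written it is the crux of your Step 2 and is not proved; it plays exactly the role of the paper's tripod estimate. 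Two small points: the tree $T$ must be chosen inside $G[S]$ (otherwise $\left|S\right|\geq\left|V(T)\right|$ does not follow), and your bound $\left|S\right|\geq\lceil\ell/2\rceil$ is weaker by one than the paper's $\lceil n/2\rceil+1$, so you only exclude geodesic cycles of length $\geq 2k+1$ rather than $\geq 2k$ --- harmless for the qualitative statement, it merely changes the explicit formula for $g(k)$.
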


We conjecture that, just as in the case of ordinary \tw, the duality is also quantitatively tight:

\begin{con1}
A graph has \ctw\ at least $k\ge 0$ if and only if it contains a bramble of connected order $> k$.
\end{con1}

We believe that the notion of connected tree-width is natural enough to merit further study: \td s, after all, are meant to exhibit the tree-like structure of a graph, and this aim is better served when its parts are connected (as in a minimum-width decomposition of a tree) than if not (as in a minimum-width decomposition of a long cycle). However, let us give a few pointers to how it relates to other contexts, both in graph theory and beyond.

One of these is hyperbolic graphs. These are important especially when they are infinite and appear as Cayley graphs of hyperbolic groups. But hyperbolicity has also been exploited for the design of algorithms.%
   \COMMENT{}
   We shall prove the following (see \autoref{sec:hyperbolicity} for a more detailed statement):

\begin{thm}\label{thm:hyperbolic}
Graphs of \ctw~$k$ are $k$-hyperbolic. This is best possible for every~$k > 1$.
\end{thm}

Another width parameter, somewhat related to \ctw, was introduced recently by Dourisboure and Gavoille~\cite{tree-length_DG2007} and has since received some attention. They call it the {\em tree-length} of a graph~$G$: the smallest value, minimized over all its \td s, of the maximum distance in $G$ of any two vertices in a common part of this decomposition. Clearly, graphs of \ctw~$<k$ also have tree-length~$<k$,%
   \COMMENT{}
   so by \autoref{thm:CycleTheorem} the tree-length of $G$ is bounded by the same function $f(k,\ell)$ as its \ctw. In fact, Reidl and Sullivan~\cite{ReidlSullivan2013, AdcockMahoneySullivan2014} observed that the following better bound follows directly from of our lemmas for the proof of \autoref{thm:CycleTheorem}:

\begin{thm}\label{thm:treelength}
If $G$ has tree-width~$<k$ and no geodesic cycle longer than~$\ell$, and $G$ is not a forest, then the tree-length of~$G$ is at most $\ell(k-2)$.
\end{thm}

Chepoi et al.~\cite{Chepoietal2008_hyperbolic} showed that graphs of tree-length at most~$k$ are $4k$-hyperbolic. We shall give a simple direct proof showing that they are $\lfloor{3\over2}k\rfloor$-hyperbolic, which is best possible.

\autoref{thm:treelength} thus implies that graphs of \tw~$<k$ that have no geodesic cycle longer than~$\ell$ are $\lfloor{3\over2}\ell(k-2)\rfloor$-hyperbolic. This confirms the conjecture of Sullivan~\cite{ReidlSullivan2013} that the hyperbolicity of graphs can be bounded in terms of their \tw\ and the maximum length of their geodesic cycles. Note that bounding just one of these parameters will not imply hyperbolicity: long cycles have small \tw\ and grids have no geodesic cycles of length~$>4$, but neither of these (classes of) graphs is hyperbolic.

Finally, connected tree-width has been used directly for the design of algorithms. Jegou and Terrioux~\cite{JegouTerrioux2014preprint, JegouTerrioux2014} introduced it, initially independently and unaware of an earlier preprint of this paper~\cite{Malte2012}, in the context of constraint satisfaction problems in AI. They show how practical CSP algorithms can be improved if the constraint network considered has bounded \ctw. They also study the complexity of computing the \ctw\ of a graph.

We use the terminology of \cite{DiestelBook10noEE}. In particular, the {\em length\/} of a path is its number of edges, and the {\em distance} $d_G(u,v)$ between vertices $u,v$ in a graph~$G$ is the minimum length of a $u$--$v$ path in~$G$. We also assume familiarity with the basic theory of \td s as described in~\cite[Ch.\,12.3]{DiestelBook10noEE}.
The proof of our main result, \autoref{thm:CycleTheorem}, will be given in Sections \ref{sec:SectionNavs}--\ref{sec:MainProof}. In \autoref{sec:SectionDuality} we use it to prove \autoref{thm:WeakCtwDualityThm}. Theorems \ref{thm:hyperbolic}--\ref{thm:treelength} are proved in Section~\ref{sec:hyperbolicity}.

\section{Making a \td\ connected}\label{sec:SectionNavs}%
   \COMMENT{}

Our approach to obtaining an upper bound for the \ctw\ of a graph~$G$ will be to start with an ordinary \td\ of $G$ of low width, and then to make its parts connected by adding paths joining its components in~$G$. Since adding vertices to a part may invalidate axiom (T3) of a \td, we may have to add the same vertices to other parts too. Our task will thus be to ensure that the number of vertices added to any given part in this way, either explicitly in order to decrease its number of components or implicitly when repairing~(T3), remains bounded.

\begin{figure}[htb]
 \centering
 \includegraphics{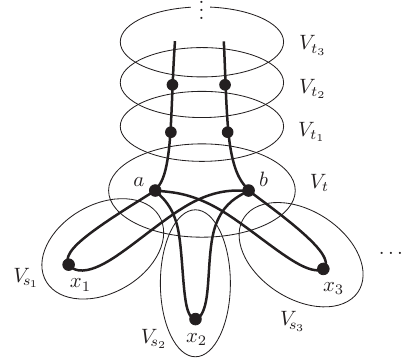}
 \caption{Making $V_{t_1}, V_{t_2},\dots$ connected can inflate $V_{t}$ unboundedly}
 \label{fig:NavReason}\vskip-3pt
\end{figure}

For example, consider the graph and \td\ indicated in \autoref{fig:NavReason}. If for $i=1,2,\dots$ we use the path through~$x_i$ to make $V_{t_i}$ connected, we will have to add all the $x_i$ to the central part~$V_{t}$ while repairing~(T3), because $t$ lies between $s_i$ and $t_i$ in~$T$.

Of course we made a bad choice here: we could use the path through~$x_1$ for every~$V_{t_i}$. This is the idea behind the following definition: if we already know a path joining two vertices $a$ and~$b$, we can re-use it whenever we have to connect $a$ and~$b$ later.

\begin{defi}[Navigational path systems, or {\em navs}]
Let $G=(V,E)$ be a graph, $K \subseteq [V]^{2}$ a set of 2-element subsets $xy :=\{x,y\}$ of its vertex set, and $\mathcal N = (P_{xy})_{xy\in K}$ a family of paths between these, where $P_{xy}$ links $x$ to~$y$.
   \begin{enumerate}[\rm (i)]\itemsep=0pt
   \item $\mathcal N$ is a \emph{$K$-nav} if for every path $P\in\mathcal N$ and for every two vertices $a,b\in P$ we have $ab\in K$ and $P_{ab} = aPb$.
   \item
   A \emph{nav} is a $K$-nav for $K = [V]^{2}$.
   \item
When $\mathcal D := (T, (V_t)_{t\in T})$ is a \td\ of $G$, then a $K$-nav satisfying $[V_t]^{2} \subseteq K$ for every $t\in T$ is a  \emph{$\mathcal D$-nav}.%
   \COMMENT{}
   \item
$\mathcal N$ is  \emph{geodesic} if every $P_{xy}\in\mathcal N$ is a shortest $x$--$y$ path in~$G$.
\item The \emph{length} $\ell(\mathcal N)$ of $\mathcal N$ is the maximum length of a path in~$\mathcal N$.
   \end{enumerate}
\end{defi}

\goodbreak

We shall use navs to make \td s connected, as follows:

\begin{lem} \label{lem:nav2conTreeDec}
Let $G$ be a graph, $\mathcal D = (T, (V_t)_{t\in T})$ a \td\ of $G$ of width~$<k$, and $\mathcal N = (P_{uv})_{uv\in K}$ a $\mathcal D$-nav of $G$. For all $t \in T$ let 
   $$W_t\> :=\ V_t\ \cup\!\!\!\bigcup_{xy\in[V_t]^{2}}\!\! V(P_{xy})$$
Then $(T, (W_t)_{t \in T})$ is a connected \td\ of~$G$ of width less than
   $$k + \binom{k}{2}  \big(\ell(\mathcal N) - 1\big).$$
\end{lem}

\begin{proof}
The family $(T, (W_t)_{t \in T})$ satisfies the axioms (T1) and~(T2) for \td s, because $\mathcal D$ does.
To prove (T3) let $t_1$, $t_2$ and $t_3$ be distinct nodes of $T$ with $t_2 \in t_1Tt_3$, and let $z\in W_{t_1} \cap W_{t_3}$ be given. We have to show that $z \in W_{t_2}$.

If $z\in V_{t_2}$ this is the case, so we assume that $z\notin V_{t_2}$. Then $V_{t_2}$, which separates $V_{t_1}$ from $V_{t_3}$ in~$G$, also separates $z$ from one of these, say from~$V_{t_3}$. Hence as $z\in W_{t_3}$ there are $x,y\in V_{t_3}$ such that $z\in P_{xy}$, and $z$ lies on a subpath $u P_{xy} v$ with $u,v\in V_{t_2}$. Since $\mathcal N$ is a $\mathcal D$-nav, this subpath coincides with $P_{uv}\sub W_{t_2}$, so $z\in W_{t_2}$ as desired.

The sets $W_t$ are connected in $G$ by construction. Since $|V_t|\le k$ by assumption, and each path in~$\mathcal N$ has at most $\ell(\mathcal N)-1$ inner vertices, they also satisfy $|W_t|\le k + \binom{k}{2} (\ell(\mathcal N) - 1)$.
\end{proof}

\autoref{lem:nav2conTreeDec} reduces our task of making a given \td~$\mathcal D$ connected without increasing its width too much to that of finding a $\mathcal D$-nav of bounded length. This will be achieved by a geodesic nav of~$G$.

While the existence of some nav in a connected graph is trivial~-- for example, we can route all its paths through some fixed spanning tree~-- the existence of a geodesic nav is more surprising. Spanning trees, for example, are never geodesic unless the entire graph is a tree.

\begin{lem} \label{lem:ExGeoNav}
Every connected graph has a geodesic nav.
\end{lem}

\begin{proof}
Let $G = (V,E)$ be a connected graph. Fix a linear ordering on~$V$, and consider the following ordering on the subsets of~$V$. If $v$ is the first vertex of $V$ that lies in the symmetric difference of two sets $U_1,U_2\sub V$, let $U_1 < U_2$ if $v\in U_2$. Then $U'\subsetneq U$ implies $U' < U$, and if we replace in $U$ a subset $U'\sub U$ with a smaller set $U'' < U'$, the resulting set $(U\setminus U')\cup U''$ is smaller than~$U$.%
   \COMMENT{}

For every two vertices $x,y\in V$ let $P_{xy}$ be a%
   \COMMENT{}
   shortest $x$--$y$ path in~$G$ whose vertex set is smallest in our ordering among all such paths. We claim that $\mathcal N := (P_{xy})_{xy\in [V]^2}$ is a nav in~$G$; if so, it will be geodesic by definition.

Suppose not. Then there are $x,y\in V$ and $a, b \in P_{xy}$ such that $P_{ab}$ differs from~$aP_{xy}b$. As both these paths are geodesic, it is easy to see that their vertex sets differ too;%
   \COMMENT{}
   then $V(P_{ab}) < V(aP_{xy}b)$ by the choice of~$P_{ab}$. By our earlier observation, replacing $aP_{xy}b$ with $P_{ab}$ in $P_{xy}$ yields a shortest $x$--$y$ path%
   \COMMENT{}
   with a vertex set smaller than~$V(P_{xy})$, contrary to the choice of~$P_{xy}$.\looseness=-1
\end{proof}

A geodesic nav $\mathcal N = (P_{xy})_{xy\in [V]^2}$ of $G$ induces, for any \td\ $\mathcal D = (T, (V_t)_{t\in T})$ of~$G$, a geodesic $\mathcal D$-nav $\mathcal N_{\mathcal D} := (P_{uv})_{uv\in K_{\mathcal D}}$ of~$G$ simply by restricting $[V]^2$ to its subset
 $$K_{\mathcal D} := \bigcup_{t \in T} \bigcup_{xy \in [V_t]^2} [V(P_{xy})]^{2};$$%
 note that $\mathcal N_{\mathcal D}$ is indeed a nav.
   \COMMENT{}
 Its length $\ell(\mathcal N_{\mathcal D})$ is bounded by the maximum distance of two vertices in a common part of~$\mathcal D$.

We have thus reduced our task further: it only remains to find a \td\ of width $\ttw(G)$ such that the distance between any two vertices in a common part is bounded by a function of $\ttw(G)$ and of the maximum length of a geodesic cycle in~$G$.

\section{Atomic \td s}\label{sec:SectionAtomicTD}

We shall work with \td s that cannot be refined in a certain sense. These were introduced by Thomas~\cite{thomas90} and have also been used in~\cite{bellenbaumDiestel}. We therefore only sketch the proofs of the few easy lemmas we need. Explicit proofs can be found in~\cite{Malte2012}.

\begin{defi}[Atomic \td]
The \emph{fatness} of a \td\ of a graph of order~$n$ is the $n$-tuple $(a_0, \ldots, a_n)$ in whichs $a_i$ denotes the number of parts of order~$n-i$. A~\td\ of lexicographically minimum fatness is \emph{atomic}.
\end{defi}

Clearly, atomic \td s of a graph $G$ have width $\ttw(G)$. The point of the notion is to minimize not just the largest part but the smaller ones too, in turn as their size decreases. 

Let $\mathcal D = (T, (V_t)_{t \in T})$ be an atomic \td\ of a graph~$G$.

\goodbreak
\begin{lem}\label{proper}
No part of $\mathcal D$ contains another part.
\end{lem}

\begin{proof}
If there are $t\ne t'$ such that $V_{t'}\subseteq V_t$, then by~(T3) such $t,t'$ can be chosen adjacent. Contracting the edge $tt'$ in $T$ and assigning $V_t$ as a part to the new contracted node yields a \td\ of smaller fatness than~$\mathcal D$, a contradiction.
\end{proof}

Given an edge $e=t_1 t_2$ of~$T$ and $i\in\{1,2\}$, let $T_i$ be the component of $T-e$ containing~$t_i$, and put $U_i := \bigcup_{t\in T_i} V_t$ and $G_i := G[U_i]$. Then $\{U_1,U_2\}$ is a separation of~$G$, with $X:= U_1\cap U_2 = V_{t_1}\cap V_{t_2}$~\cite[Lemma\,12.3.1]{DiestelBook10noEE}. 

\begin{lem}\label{connected}
$G_1 - X$ and $G_2 - X$ have components $C_1$ and~$C_2$, respectively, in which every vertex from~$X$ has a neighbour.
\end{lem}

\begin{proof}%
   \COMMENT{}
If the neighbourhood of every component of $G_1 - X$ (say) is a proper subset of~$X$, we can obtain a \td\ of smaller fatness than~$\mathcal D$ by replacing its portion that decomposes $G_1$ with the decompositions that $\mathcal D$ induces on the graphs $G_1[V(C)\cup N(C)]$, where $C$ ranges over the components of $G_1 - X$. This decomposition has smaller fatness, because we lose the part~$V_{t_1}\supseteq X$ and gain only parts that are proper subsets of parts we lose.%
   \COMMENT{}
   \looseness=-1

More formally, we replace $T_1$ in $T$ with copies $T_{1,C}$ of~$T_1$, one for every~$C$, making the nodes $t_{1,C}$ corresponding to $t_1$ adjacent to~$t_2$. For every $C$ and every $t\in T_1$, we associate $V_{t_C} := V_t\cap V(C)$ with the node $t_C$ of~$T_{1,C}$ corresponding to~$t$. It can be checked that the decomposition thus obtained has smaller fatness than~$\mathcal D$.%
   \COMMENT{}
\end{proof}

Lemma \ref{connected} implies at once:

\begin{cor}\label{findcycle}
Every two vertices $u,v\in X$ are linked by an $X$-\,path in~$G_1$ and another in~$G_2$, which thus form a cycle.\qed
\end{cor}

Our final lemma ensures that any two non-adjacent vertices in a common part of~$\mathcal D$ lie in some such~$X$. We shall then be able to use Corollary~\ref{findcycle} to find a cycle through them, which will be our aim when we prove Theorem~\ref{thm:CycleTheorem}.

\begin{lem}\label{thm:atoTDexIntersection}
For all $t\in T$ and distinct $u,v\in V_t$, either $uv\in E(G)$ or $t$~has a neighbour $s$ in~$T$ such that $u,v\in V_s$.
\end{lem}

\begin{proof}
If not, we split $V_t$ into $V_{t'}:= V_t\setminus\{u\}$ and $V_{t''}:= V_t\setminus\{v\}$, and replace $t$ in $T$ with new adjacent nodes~$t',t''$. This yields another \td\ (of smaller fatness) since, by assumption, every $V_s$ with $st\in E(T)$ fails to contain either $u$ or~$v$, so we can join $s$ to~$t'$ or~$t''$, respectively, in the expanded decomposition tree.
\end{proof}

\section{Use of the cycle space}\label{sec:SectionCycleSpace}

As shown at the end of Section~\ref{sec:SectionNavs}, our remaining task for a proof of Theorem \ref{thm:CycleTheorem} was to show that, in a suitable \td\ of width~${\ttw(G)<k}$, say,%
   \COMMENT{}
   the distance in $G$ between any two vertices in a common part is bounded by a function of $k$ and of the maximum length~$\ell$ of a geodesic cycle in~$G$.

The \td\ that we shall choose will be any atomic one, say ${\mathcal D} = (T, (V_t)_{t \in T})$. By Lemma~\ref{thm:atoTDexIntersection} we can then find, for any non-adjacent vertices $u,v$ in a common part of~$\mathcal D$, adjacent nodes $t_1,t_2\in T$ such that $\{u,v\}\subseteq X := V_{t_1}\cap V_{t_2}$. Corollary~\ref{findcycle} then provides $X$-paths $P_1\subseteq G_1$ and $P_2\subseteq G_2$ linking $u$ to~$v$ (the $G_i$ being defined as in Section~\ref{sec:SectionAtomicTD}), so that $C_{uv} := P_1\cup P_2$ will be a cycle.

If these cycles $C_{uv}$ were geodesic, the distances $d_G (u,v)$ would be bounded by~$\ell/2$ and achieve our goal. Unfortunately they are not. However, we shall use an algebraic argument to deduce from their existence that we can link the vertices~$u,v$ using only geodesic cycles meeting~$X$, and only a bounded number of these.%
   \COMMENT{}

Given $X\subseteq V(G)$ and a set $\mathcal C$ of cycles in~$G$, let us write
 $$\XC  := \bigcup\big\{ C\in {\mathcal C}\mid V(C)\cap X\ne\emptyset\big\}.$$
The set $\mathcal C$ will later be the set of geodesic cycles in~$G$. The {\em diameter} of~$X$ in~$G$ is the maximum distance in $G$ between any two vertices in~$X$.

\begin{lem}\label{lem:connectedClosure2distanceInG}
If $G[X]\cup\XC $ is connected, $1\le\ell\in\Bbb N$,%
   \COMMENT{}
   and every cycle in $\mathcal C$ has length at most~$\ell$, then $X$ has diameter at most $\ell\, (|X| - 1)$ in~$G$.
\end{lem}

\begin{proof}
If $\XC = \emptyset$ then $G[X]$ is connected, and the assertion holds. If not, then $\ell\ge 3$, which we shall now assume.%
   \COMMENT{}

Let $x_1,x_2\in X$ be given. By assumption, $G[X]\cup\XC $ contains an \hbox{$x_1$--$x_2$} walk~$W$ consisting of segments that are each either an edge of $G[X]$ or a path on a cycle in~$\XC $. By definition of~$\XC$, these latter segments, too, can be chosen so as to meet~$X$.%
  \COMMENT{}
   Cutting loops out of $W$ as necessary we may assume that no two of these segments meet $X$ in the same vertex~$x$, unless they are consecutive segments meeting at~$x$. 

Let us combine each pair of consecutive segments $u\dots x$ and $x\dots v$ with $x\in X$ and $u\notin X$ to a new segment $u\dots v$. This can be done simultaneously with all relevant segments, since by our conditions on $x$ and~$u$ a given segment can be combined with at most one of its two adjacent segments.%
   \COMMENT{}
   Choosing $u\dots x$ and $x\dots v$ as either an edge of~$G[X]$ or the shorter half of a cycle in~$\XC$, we can ensure that the combined segment has length at most~$\ell$.\looseness=-1

The resulting segments all meet $X$ and have distinct first vertices in~$X$. They each have length at most~$\ell$. The first segment, starting at~$x_1$, can in fact be chosen of length at most~$\ell/2$. So can the last segment, the one ending at~$x_2$, unless it is a combined segment, in which case $x_2$ is not its first vertex in~$X$. Altogether we now have either $|X|-1$ segments of length at most~$\ell$,%
   \COMMENT{}
   or $|X|-2$ inner segments of length at most~$\ell$ and two outer segments of length at most~$\ell/2$. So $W$ has length at most $\ell\, (|X|-1)$, as claimed.
\end{proof}

In our next lemmas we use + to denote addition in the binary cycle space of~$G$, that is, for the symmetric difference of edge sets. To avoid clutter, we shall not always distinguish notationally between a subgraph and its edge set, or between an edge $e$ and the set~$\{e\}$.

\begin{lem}\label{claimlemma}
Let $P=u\dots v$ be a path in $G$, where $uv=:e\notin E(G)$.%
   \COMMENT{}
   Let $D$ be an element of the cycle space of~$G$. Then $P + D$ contains a $u$--$\,v$~path.
\end{lem}

\begin{proof}
   \COMMENT{}
  Since $(P + e) + D$ lies in the cycle space of $G + e$, it is also an edge-disjoint union of cycles~\cite[Prop.\,1.9.1]{DiestelBook10noEE}. If~$C$ is the cycle containing~$e$, then $C-e$ is the desired \hbox{$u$--$v$}~path in $P + D$.
\end{proof}

\begin{lem}\label{lem:SeparatedCycle2ClosurePath}
Let $G = G_1\cup G_2$, and let $u,v\in X= V(G_1\cap G_2)$ be non-adjacent in~$G$. Assume that each~$G_i$ contains an $X$-\,path $P_i = u\dots v$, and that $\mathcal C$ is a set of cycles generating $C=P_1\cup P_2$. Then $\XC $ contains a $u$--$\,v$~path.
\end{lem}

\begin{proof}
As $\mathcal C$ generates~$C$, we can find subsets $\mathcal C_1, \mathcal C_X, \mathcal C_2\subseteq\mathcal C$ such that
 $$C=\bigoplus\mathcal C_1 + \bigoplus\mathcal C_X + \bigoplus\mathcal C_2\,,$$
 where every cycle in~$\mathcal C_i$ has its vertices in $G_i - X$ and every cycle in $\mathcal C_X$ meets~$X$. By Lemma~\ref{claimlemma} there is a $u$--$v$ path~$P$ in $P_1 + \bigoplus\mathcal C_1$. This latter set of edges is disjoint from $P_2 + \bigoplus\mathcal C_2$, so
 $$P\subseteq P_1 + \bigoplus\mathcal C_1 + P_2 + \bigoplus\mathcal C_2 = C + \bigoplus\mathcal C_1 + \bigoplus\mathcal C_2 = \bigoplus\mathcal C_X\subseteq \XC \,.$$\vskip-6pt
\end{proof}

\begin{lem}\label{lem:diameter}
Let $\mathcal D$ be any atomic \td\ of~$G$, and $1\le\ell\in\Bbb N$. Let $\mathcal C$ be a set of cycles in~$G$, all of length at most~$\ell$, that generate its cycle space. Then any two non-adjacent vertices in a common part $W\!$ of~$\mathcal D$ have distance at most~$\ell\cdot (|W|-2)$ in~$G$.
\end{lem}

\begin{proof}
Let ${\mathcal D} = (T, (V_t)_{t \in T})$. Let $W$ be any part of~$\mathcal D$, and let $u,v\in W$ be non-adjacent. We show that $d_G(u,v)\le\ell\cdot (|W|-2)$.

By Lemma~\ref{thm:atoTDexIntersection} we can find adjacent nodes $t_1,t_2\in T$ such that $\{u,v\}\subseteq X := V_{t_1}\cap V_{t_2}$. For every two non-adjacent vertices $u',v'$ in~$X$,%
   \COMMENT{}
   Corollary~\ref{findcycle} ensures that there are $X$-\,paths $P_1\subseteq G_1$ and $P_2\subseteq G_2$ linking $u'$ to~$v'$ (the $G_i$ being defined as in Section~\ref{sec:SectionAtomicTD}). By
\autoref{lem:SeparatedCycle2ClosurePath} there is a $u'$--$v'$ path in~$\XC$. Thus, $G[X]\cup\XC$ is connected.

By \autoref{lem:connectedClosure2distanceInG}, the vertices $u,v$ have distance at most~$\ell\cdot(|X|-1)$ in~$G$. By \autoref{proper} we have $|X| < |W|$, which completes the proof.
\end{proof}

\section{Proof of \autoref{thm:CycleTheorem}}\label{sec:MainProof}

We are now ready to prove our main result, which we restate.

\begin{thm1}
The \ctw\ of a graph $G$ is bounded above by a function of its \tw\ and of the maximum length of its geodesic cycles.

Specifically, if $G$ is not a forest, $\ttw(G) < k\in\Bbb N$, and $\ell$ is the maximum length of a geodesic cycle in~$G$, then $\cttw(G) < f(k,\ell)$ for
\begin{equation*}
f(k,\ell) = k +  \binom{k}{2} \big(\ell (k-2) - 1\big).
\end{equation*}
\end{thm1}

\begin{proof}
The \ctw\ of forests coincides with their ordinary \tw, so we assume that $G$ contains a cycle. Let $\ell$ denote the maximum length of a geodesic cycle in~$G$, and let $k\in\Bbb N$ be such that ${\ttw(G) < k}$. Assume without loss of generality that $G$ is connected.

By \autoref{lem:ExGeoNav}, $G$~has a geodesic nav~$\mathcal N$. Let ${\mathcal D} = (T, (V_t)_{t \in T})$ be any atomic \td\ of~$G$.  Then $\mathcal N$ induces a geodesic $\mathcal D$-nav $\mathcal N_{\mathcal D} := (P_{uv})_{uv\in K_{\mathcal D}}$ of~$G$, where
 $$K_{\mathcal D} := \bigcup_{t \in T} \bigcup_{xy \in [V_t]^2} [V(P_{xy})]^{2};$$%
 For each $t\in T$ let 
   $$W_t\> :=\ V_t\ \cup\!\!\!\bigcup_{xy\in[V_t]^{2}}\!\! V(P_{xy})\,.$$
By \autoref{lem:nav2conTreeDec}, $(T, (W_t)_{t \in T})$ is a connected \td\ of~$G$ of width less than
   $$k + \binom{k}{2}  \big(\ell(\mathcal N_{\mathcal D}) - 1\big).$$

It remains to show that $\ell(\mathcal N_{\mathcal D})\le \ell (k-2)$, i.e., that any two vertices $x,y$ in a common part of~$\mathcal D$ have distance at most~$\ell(k-2)$ in~$G$.%
   \COMMENT{}
   This is clear if $x,y$ are adjacent, since $G$ contains a cycle and hence $k,\ell\ge 3$. If $x,y$ are non-adjacent, it follows from Lemma~\ref{lem:diameter}, since the geodesic cycles of $G$ generate its cycle space~\cite[Ex.\,1.32]{DiestelBook10noEE} and $\mathcal D$ realizes the \tw\ of~$G$.
\end{proof}

\section{Tree-length and hyperbolicity}\label{sec:hyperbolicity}

The {\em tree-length\/} of a graph $G$ is the smallest value, minimized over all \td s of~$G$, of the maximum diameter in~$G$ of a part of this decomposition~\cite{tree-length_DG2007}. Reidl and Sullivan, see~\cite{ReidlSullivan2013, AdcockMahoneySullivan2014},%
   \COMMENT{}
   noticed that our lemmas from Sections \ref{sec:SectionAtomicTD} and~\ref{sec:SectionCycleSpace} can be applied directly to give a better bound on the tree-length of~$G$ than the bound of $f(\ell,k)$ implied by \autoref{thm:CycleTheorem}:

\begin{thm4}
If $G$ has tree-width~$<k$ and no geodesic cycle longer than~$\ell$, and $G$ is not a forest, then the tree-length of~$G$ is at most $\ell(k-2)$.
\end{thm4}

\begin{proof}
This follows from \autoref{lem:diameter}, as the geodesic cycles of $G$ generate its cycle space and every atomic \td\ realizes its \tw.
\end{proof}

Reidl and Sullivan point out that the graph obtained from the $n\times n$ grid by subdividing every every edge $m-1$ times has tree-width~$n$, no geodesic cycle longer than $\ell=4m$, and its tree-length at most~$nm$.%
   \footnote{In fact, it has \ctw~$nm$: the standard width~$n$ \td s of the $n\times n$ grid induce connected \td s of its subdivision of width~$nm$, and the well-known brambles of order~$n+1$ in the grid, see e.g.\ \cite[Ex.\,12.21]{DiestelBook10noEE}, extend to brambles of connected order~$nm+1$. See \autoref{sec:SectionDuality} for connected bramble order.}%
   \COMMENT{}
   The bound of $4m(n-1)$ offered by \autoref{thm:treelength} is thus off by no more than a factor of~4.

Given a real number $d\ge 0$, a graph is called {\em $d$-hyperbolic\/} if whenever vertices $x,y,z$ are linked by shortest paths $P_{xy} = x\dots y$ and $P_{yz} = y\dots z$ and $P_{xz} = x\dots z$, each of these paths lies within distance~$d$ of the other two. Trees, for example, are 0-hyperbolic, since $P_{xz}\sub P_{xy}\cup P_{yz}$.

Chepoi et al~\cite{Chepoietal2008_hyperbolic}%
   \COMMENT{}
   showed that graphs of tree-length~$d$ are $4d$-hyperbolic.%
   \footnote{Their result, Proposition~13, says `$d$-hyperbolic'. But they use a different notion of hyperbolicity that translates to ours at a loss of a factor of~4. See their Proposition~1.}
  It is not hard to make this bound sharp:

\begin{lem}\label{lem:hyperbolic}
Graphs of tree-length~$d$ are $\lfloor{3\over2}d\rfloor$-hyperbolic. This value is sharp for every $d\ge 0$.
\end{lem}

\begin{proof}
Let $G$  be a graph with a \td\ $(T, (V_t)_{t  \in T})$ into parts~$V_t$ each of diameter at most~$d$ in~$G$. Consider vertices $x,y,z\in G$ and shortest paths $P_{xy}, P_{yz}, P_{xz}$ between them as in the definition of hyperbolicity. We show that $P=P_{xz}$ lies within distance~$\lfloor{3\over2}d\rfloor$ of~$P' = P_{xy}\cup P_{yz}$.

Let $t_0\dots t_n$ be the path in~$T$ between a node $t_0$ such that $x\in V_{t_0}$ and a node $t_n$ with $z\in V_{t_n}$. Since $P$ is connected and meets $V_{t_0}$ as well as~$V_{t_n}$, it meets every set $S_i := V_{t_i}\cap V_{t_{i-1}}$ with $i=1,\dots,n$~\cite[Lemma~12.3.1]{DiestelBook10noEE}, in a vertex~$s_i\in S_i$ say. Similarly, $P'$ has a vertex $s'_i$ in every~$S_i$. Put $s_0:= x =: s'_0$ and $s_{n+1}:= z =: s'_{n+1}$, and let $S:= \{s_0,\dots,s_{n+1}\}$ and $S' := \{s'_0,\dots,s'_{n+1}\}$.

For each $i = 0,\dots,n$ we have $s_i,s_{i+1}\in V_{t_i}$ and hence $d(s_i,s_{i+1})\le d$. Since $P$ is a shortest path, its segments $s_i P s_{i+1}$ thus have length at most~$d$, so every vertex of $P$ lies within distance~$\lfloor d/2\rfloor$ of~$S$. Similarly, $d(s_i,s'_i) \le d$ for every~$i$, so $S$ lies within distance~$d$ of $S'\sub V(P')$. Hence $P$ lies within distance~$\lfloor{3\over2}d\rfloor$ of~$P'$, as claimed.

To show that the bound of $\lfloor{3\over2}d\rfloor$ is sharp, consider a path-decomposition of adhesion~2 into many copies of~$K^4$ and with disjoint adhesion sets. Subdividing every edge $d-1$ times yields a graph $G$ of tree-length at most~$d$:%
   \COMMENT{}
   the \td\ witnessing this is a path-decomposition whose parts are the 4-vertex-sets from the~$K^4$s together with pendent parts each consisting of a subdivided edge.

It is easy to find in $G$ three vertices $x,y,z$ with shortest paths $P_{xy}$, $P_{yz}$ and $P_{xz}$ between them such that each subdivdided~$K^4$ (other than the first and the last) meets $P_{xz}$ in exactly one subdivided edge~$P$ and $P_{xy}\cup P_{yz}$ in exactly one subdivided edge $Q$ disjoint from~$P$ (Fig.~\ref{fig:K4s}). For each subdivided~$K^4$ except the leftmost and the rightmost one, the vertex closest to the mid-point of $P\sub P_{xz}$ has distance exactly $\lfloor{3\over2}d\rfloor$ not only from~$Q$ but from all of~$P_{xy}\cup P_{yz}$.
\end{proof}\vskip-6pt

\begin{figure}[htb]
 \centering
 \includegraphics{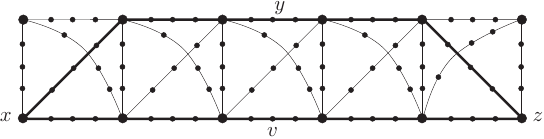}
 \caption{Geodesic paths between $x,y,z$ showing that $\lfloor{3\over2}d\rfloor$-hyperbolicity is best possible for tree-length~$d$}
 \label{fig:K4s}
\end{figure} 

\autoref{lem:hyperbolic} and \autoref{thm:treelength} imply:

\begin{cor}\label{cor:hyperbolicitybdd}
If $G$ has tree-width~$<k$, has no geodesic cycle longer than~$\ell$, and $G$ is not a forest, then $G$ is $\lfloor{3\over2}\ell(k-2)\rfloor$-hyperbolic.\qed
\end{cor}

If we strengthen the premise in \autoref{lem:hyperbolic} so as to assume that not just the tree-length but even the \ctw\ is bounded, we also get a stronger bound, which is again sharp:

\begin{thm3}
The following statements hold for every integer $k>1:$
\begin{enumerate}[\rm (i)]\itemsep=0pt
\item Every graph of \ctw~$k$ is $k$-hyperbolic.
\item There exists a graph of \ctw~$k$ that is not $(k-1)$-hyperbolic.
\end{enumerate}
The graphs of \ctw~$1$ are forests, which are $0$-hyperbolic.
\end{thm3}

\begin{proof}
Repeat verbatim the first two paragraphs of the proof of \autoref{lem:hyperbolic}, except that the \td\ $(T, (V_t)_{t  \in T})$ is now connected and of width~$k$, and we wish to show that $P$ lies within distance~$k$ of~$P'$. Let $v\in P$ be any vertex, say in its segment $P_i := s_i P s_{i+1}$.

Let $T_i$ be a spanning tree of~$G[V_{t_i}]$. Since $P$ is a geodesic path, $P_i$~is at most as long as the path $Q_i := s_i T_i s_{i+1}$. Let $R_i$ be the unique $s'_i$--$Q_i$ path in~$T_i$, and let $q_i\in Q_i$ be its last vertex.

Since $s_i P_i s_{i+1} Q_i s_i$ is a closed walk that has $\size{P_i} + \size{Q_i}\le 2\size{Q_i}$ edges and contains both $v$ and~$q_i$, we have $d(q_i,v)\le\size{Q_i}$. As $R_i$ is an $s'_i$--$q_i$ path with at most $\size{T_i} - \size{Q_i} \le k - \size{Q_i}$ edges,%
   \COMMENT{}
   we thus have $d(s'_i,v)\le k$.

Figure~\ref{fig:hyp} shows a graph $G$ of \ctw~$k=5$ that is not $(k-1)$-hyperbolic. Indeed, $G$~has a path-decomposition into parts of order at most~$k+1$ that form roughly vertical slices, each spanning a path in~$G$. Two (non-adjacent) such parts are shown in the figure.%
   \COMMENT{}
   To see that $G$ is not $(k-1)$-hyperbolic, consider the shortest paths $P_{xy}$, $P_{yz}$, $P_{xz}$ between $x$, $y$ and~$z$ whose union is the perimeter cycle, and the middle vertex $v$ of~$P_{xz}$. The example can be adapted to other values of $k\ge 3$ by extending the underlying ladder and altering the number of small vertices on its rungs, and to $k=2$ by keeping the rungs unsubdivided and contracting the edge below~$y$.
\end{proof}\vskip-9pt\vskip0pt

\begin{figure}[htb]
 \centering
 \includegraphics{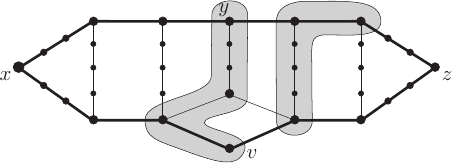}
 \caption{The vertex $v\in P_{xz}$ has distance $k$ from $P_{xy}\cup P_{yz}$}
 \label{fig:hyp}
\end{figure}

In the notation of the proof of \autoref{thm:hyperbolic}\,(i), replacing the segments $P_i\sub P$ with~$Q_i\sub T_i$ shows that, given a connected \td\ $(T, (V_t)_{t  \in T})$ of width~$<k$ of~$G$, the distance in $G$ between two vertices $v\in V_t$ and $v'\in V_{t'}$ is at most ${k\cdot (d_T(t,t')+1)}$.%
   \COMMENT{}
   It thus seems that graphs of bounded \ctw\ are quasi-isometric to their decomposition trees (and, in particular, hyperbolic). However, this is not the case, since the converse inequality may fail: there need not be a constant $c_k$%
   \COMMENT{}
   such that $d_T(t,t')\le c_k\cdot d_G(v,v')$, even if we choose the \td\ well.%
   \COMMENT{}

For example, let $G$ be obtained from a long path~$P$ by joining a new vertex~$v$ to every vertex of~$P$. All its \td s of low width, connected or not, have a long path in their decomposition tree~$T$: otherwise $T$ would have a node $t$ of large degree, and $V_t$ would be a small separator of $G$ leaving many components; but $G$ has no such separator. So while the distance of two vertices $v,v'\in P$ is always at most~2 in~$G$, the distance in $T$ of nodes $t,t'$ with $v\in V_t$ and $v'\in V_{t'}$ can be arbitrarily large.

\section{Duality}\label{sec:SectionDuality}

We have postponed until now a proof that not only do long cycles themselves have large \ctw\ (as we saw in the Introduction), but more generally that any graph with a long geodesic cycle does. Indeed, this is not entirely straightforward to prove. It is, however, an immediate consequence of \ctw\ duality, our topic in this last section.

Duality theorems for width parameters give us witnesses for large width. A~general framework for this, which includes all the classical duality theorems and several others, has recently been developed in~\cite{DiestelOumDualityI}. For \tw, the classical duality is between \td s and brambles:

\begin{twDualityThm}[Seymour \& Thomas \cite{ST1993GraphSearching,DiestelBook10noEE}]
A graph has \tw\ at least $k\ge 0$ if and only if it contains a bramble of order $> k$.
\end{twDualityThm}

\noindent
Rephrased as a min-max theorem: 1 + the minimum width of a \td\ equals the maximum order of a bramble.%
   \COMMENT{}

To adapt this duality to \ctw, let the \emph{connected order} of a bramble be the least order  of a \emph{connected cover}, a cover spanning a connected subgraph. (See the Introduction, or~\cite[Ch.\,12]{DiestelBook10noEE}, for definitions.)

\begin{con1}
Let $k \geq 0$ be an integer. A graph has \ctw\ at least~$k$ if and only if it contains a bramble of connected order $> k$.
\end{con1}

The proof of the backward implication, or of $\ge$ in the min-max version, is the same as for ordinary \tw. Given any bramble~$\mathcal B$ and any \td~$\mathcal D$, one shows that some part of~$\mathcal D$ covers~$\mathcal B$. Hence no bramble has greater order than the largest size of a part of~$\mathcal D$. And if $\mathcal D$ is connected, no bramble has greater connected order.

Let us apply this to show that long geodesic cycles raise the \ctw\ of a graph by giving rise to a bramble of large connected order.

\begin{lem} \label{lem:geoCycle2Bramble}
If a graph $G$ contains a geodesic cycle of length~$k\in\Bbb N$, then $G$ has a bramble of connected order at least~$\lceil k/2 \rceil + 1$, and $\cttw(G)\ge \lceil k/2 \rceil$.
\end{lem}

\begin{proof}
Let $C\sub G$ be a geodesic cycle of length~$k$, and let $\mathcal B$ be the set of all connected subsets of $V(C)$ of size $\lfloor k/2 \rfloor$. This is clearly a bramble. We shall prove that every connected cover $X$ of~$\mathcal B$ has at least $\lceil k/2 \rceil + 1$ vertices. This will also imply that $\cttw(G)\ge \lceil k/2 \rceil$, as noted above.

Choose $x,y\in X\cap V(C)$ at maximum distance $d_C(x,y) = d_G(x,y)$. If this distance is~$k/2\in\Bbb N$, then%
   \COMMENT{}
   $X$ contains an $x$--$y$ path with $\lceil k/2 \rceil + 1$ vertices.

If $d_C(x,y) < k/2$, then $X$ has another vertex $z$ in the longer $x$--$y$ segment of~$C$.%
   \COMMENT{}
  Let $C_{xy}, C_{yz}, C_{zx}$ be the shortest paths on~$C$ between their indices. By the choice of $x,y,z$, none of these contains another,%
   \COMMENT{}
   so $C_{xy}\cup C_{yz}\cup C_{zx} = C$. Hence $d(x,y) + d(y,z) + d(z,x)\ge k$, in $C$ and hence also in~$G$.

Consider a spanning tree~$T$ of~$G[X]$. The lenghts of the paths $xTy$, $yTz$ and~$zTx$ sum to at least $d(x,y) + d(y,z) + d(z,x)\ge k$. But no edge of $T$ lies on more than two of these paths, so $T$ has at least $\lceil k/2 \rceil$ edges. Thus, $|X| = |T|\ge \lceil k/2 \rceil + 1$ as desired.
\end{proof}

We already saw in the Introduction that $k$-cycles themselves have \ctw\ at least~$\lceil k/2 \rceil$. \autoref{lem:geoCycle2Bramble} says that graphs containing such cycles geodesically inherit this. This may lead one to suspect that, whenever $H\sub G$ geodesically, $\cttw(H)\le\cttw(G)$. The graph $G$ of Figure~\ref{fig:wheel}, however, shows that this is not so. Indeed, $G$ itself has a connected \td\ of width~3, with $v$ and its neighbours as a central part of order~4. Its geodesic subgraph $H=G-v$, however, has a bramble of connected order~5 (and hence $\cttw(H)\ge 4$): the set of paths of order~4 on its outer 9-cycle.%
   \COMMENT{}

\begin{figure}[htb]
 \centering
 \includegraphics{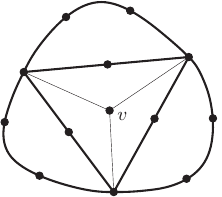}
 \caption{$G-v$ is geodesic in $G$ but has larger \ctw{}}
 \label{fig:wheel}
\end{figure} 

Still, it would be interesting to explore further the relationship between both the \ctw\ and the largest connected order of brambles of a graph and the corresponding parameters of its geodesic subgraphs. 

While deleting edges can increase the \ctw\ of a graph (for example, delete the inner rungs of a long ladder), it is easy to see that contracting edges cannot increase the \ctw.%
   \COMMENT{}

Let us finish by proving a qualitative connected version of the hard implication of the \tw\ duality theorem, to establish at least a weakening of our \ctw\ duality conjecture:

\begin{thm2}
There is a function $g\colon\mathbb{N} \rightarrow \mathbb{N}$ such that every graph of \ctw\ at least~$g(k)$ has a bramble of connected order~$>k$.
\end{thm2}

\begin{proof}
Let $g(k) = 2$ for $k\le 2$, and $g(k) = f(k,2k-2)$ for $k > 2$, where $f$ is the function from \autoref{thm:CycleTheorem}.

Let $G$ be a graph with no bramble of connected order $> k\in\Bbb N$. If $G$ is a forest then $\cttw(G) < 2\le g(k)$,%
   \COMMENT{}
   as desired. Assume now that $G$ is not a forest. Then $k > 2$, because cycles contain brambles of order~3.%
   \COMMENT{}

Since every bramble in $G$ has (connected) order~$\le k$,%
   \COMMENT{}
   we have $\ttw(G) < k$ by the classical \tw\ duality theorem, and every geodesic cycle in $G$ has length at most~$2k - 2$ by \autoref{lem:geoCycle2Bramble}.%
   \COMMENT{}
  By \autoref{thm:CycleTheorem} this implies $\cttw(G) < f(k,2k-2) = g(k)$, as desired.
\end{proof}

\section{Acknowledgement}

 We thank Daniel Wei\ss auer for pointing out an error in an earlier version of Lemma~\ref{connected}.

\newpage

\bibliographystyle{plain}
\bibliography{collective}

\end{document}